\numberwithin{equation}{section}
\newtheorem{theorem}{Theorem}[section]
\newtheorem{corollary}{Corollary}[theorem]
\newtheorem{lemma}[theorem]{Lemma}
\begin{document}
\author{Alexander E Patkowski}
\title{On Davenport Expansions, Popov's formula, and Fine's query}

\maketitle
\begin{abstract} We establish an explicit connection between a Davenport expansion and the Popov sum. Asymptotic analysis follows as a result of these formulas. New solutions to a query of N.J. Fine are offered, and a proof of Davenport expansions is detailed. \end{abstract}

\keywords{\it Keywords: \rm Davenport expansions; Riemann zeta function; von Mangoldt function}

\subjclass{ \it 2010 Mathematics Subject Classification 11L20, 11M06.}

\section{Introduction and Main formulas} Let $\Lambda(n)$ denote the von Mangoldt function, $\zeta(s)$ the Riemann zeta function, and $\rho$ the non-trivial (complex) zeros of the Riemann zeta function [5, p. 43]. In a recent paper [10] we established a proof of Popov's formula [12]:
\begin{equation}\sum_{n>x}\frac{\Lambda(n)}{n^2}\left(\{\frac{n}{x}\}-\{\frac{n}{x}\}^2\right)=\frac{2-\log(2\pi)}{x}+\sum_{\rho}\frac{x^{\rho-2}}{\rho(\rho-1)}+\sum_{k\ge1}\frac{k+1-2k\zeta(2k+1)}{2k(k+1)(2k+1)}x^{-2k-2},\end{equation}
for $x>1.$ Here $\{x\}$ is the fractional part of $x,$ sometimes written as $\{x\}=x-\lfloor{x\rfloor},$ where $\lfloor{x\rfloor}$ is the floor function. The proof relied on the Mellin transform formula
\begin{equation}\frac{1}{2}\left(\{x\}^2-\{x\}\right)=\frac{1}{2\pi i}\int_{(b)}\left(\frac{s+1}{2s(s-1)}-\frac{\zeta(s)}{s}\right)\frac{x^{s+1}}{s+1}ds,\end{equation} which is valid for $x>0,$ $-1<b<0.$ This can be observed by noting (1.2) is known [15, pg. 14, eq. (2.1.4)] for $x>1,$ and by [10, pg.405] for $0<x<1,$
$$\frac{1}{2\pi i}\int_{(b)}\frac{x^{s}}{s(s-1)}ds=0.$$
Let $f(n)$ be a suitable arithmetic function such that $L(s)=\sum_{n\ge1}f(n)n^{-s}$ analytic for $\Re(s)>1,$ and $S(x)=\sum_{n\le x}f(n).$ Examining the right hand side of (1.2), it is not difficult to see that \begin{equation}\frac{1}{2}\sum_{n>x}\frac{f(n)}{n^2}\left(\{\frac{n}{x}\}-\{\frac{n}{x}\}^2\right) = \frac{1}{2\pi i}\int_{(b)}\frac{x^{-s-1}}{2s(s-1)}L(1-s)ds-\frac{1}{2\pi i}\int_{(b)}\frac{x^{-s-1}\zeta(s)}{s(s+1)}L(1-s)ds.\end{equation}
If we replace $s$ by $1-s$ in the first integral in (1.3) and apply Fubini's theorem to interchange the integrals, we see that this integral is equal to
\begin{equation} \frac{1}{2x}\int_{0}^{x} \left( \frac{1}{2\pi i}\int_{(1-b)}\frac{y^{s-2}}{s}L(s)ds \right) dy=\frac{1}{2x}\int_{0}^{x}\frac{S(y)dy}{y^2}, \end{equation}
by the Mellin-Perron formula.
If we generalize the formula of Segal [14, eq.(5)], we can see that the second integral on the right side of (1.3) is 
\begin{equation}\begin{aligned}\int_{0}^{\frac{1}{x}}\left(\sum_{n\ge1}\frac{f(n)}{n}(\{ny\}-\frac{1}{2})\right)dy
&=\int_{0}^{\frac{1}{x}}\left(-\frac{1}{\pi}\sum_{n\ge1}\frac{F(n)}{n}\sin(2\pi ny)\right)dy\\
&=\frac{1}{2\pi^2}\sum_{n\ge1}\frac{F(n)}{n^2}\left(\cos(\frac{2\pi n}{ x})-1\right), \end{aligned}\end{equation}
assuming uniform convergence where $F(n)=\sum_{d|n}f(d).$ Collecting (1.3), (1.4), and (1.5), we obtain the following theorem upon noting that Davenport's proof of uniform convergence is dependent on a special estimate [4].
\begin{theorem} Let $f(n)$ be chosen such that $L(s)$ is analytic for $\Re(s)>1,$ and that $\sum_{n\le N}f(n)e^{2\pi inx}=O(x(\log(x)^{-h}),$ for any fixed $h.$ We have for $x>1,$
$$\frac{1}{2}\sum_{n>x}\frac{f(n)}{n^2}\left(\{\frac{n}{x}\}-\{\frac{n}{x}\}^2\right)=\frac{1}{2x}\int_{0}^{x}S(y)\frac{dy}{y^2}+\frac{1}{2\pi^2}\sum_{n\ge1}\frac{F(n)}{n^2}(\cos(\frac{2\pi n}{ x})-1).$$
\end{theorem}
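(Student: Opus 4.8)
The plan is to obtain the theorem by combining the three identities (1.3), (1.4), and (1.5) already derived, so that all of the work consists in justifying the interchanges of summation and integration that underlie them. First I would make (1.3) rigorous: with $b\in(-1,0)$ the representation (1.2) holds for every argument $n/x$, $n\ge1$, and on the line $\Re(s)=b$ the Dirichlet series $L(1-s)=\sum_{n\ge1}f(n)n^{s-1}$ converges absolutely because $\Re(1-s)=1-b>1$, while --- after possibly shrinking the admissible range of $b$ --- the kernels $\bigl(2s(s-1)\bigr)^{-1}$ and $\zeta(s)\bigl(s(s+1)\bigr)^{-1}$ are integrable on that line by the convexity bound for $\zeta$. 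These two facts permit Fubini's theorem to carry the sum over $n$ through the contour integral in (1.2); the terms with $n\le x$ then drop out by virtue of the identity $\tfrac{1}{2\pi i}\int_{(b)}x^{s}\bigl(s(s-1)\bigr)^{-1}\,ds=0$ recorded in the introduction for $0<x<1$, and what remains is exactly (1.3).

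Next I would evaluate the two integrals on the right of (1.3). In the first, substituting $s\mapsto 1-s$ leaves $\bigl(2s(s-1)\bigr)^{-1}$ unchanged, turns $L(1-s)$ into $L(s)$, and moves the contour to $\Re(s)=1-b>1$; using $\int_0^x y^{s-2}\,dy=x^{s-1}/(s-1)$ (valid since $\Re(s)>1$) to rewrite the kernel and then Fubini's theorem to exchange the order, one obtains the iterated integral displayed in (1.4), whose inner contour integral equals $S(y)/y^{2}$ by the Mellin--Perron formula on the line $\Re(s)=1-b>1$. In the second integral I would expand $L(1-s)$ term by term and identify $\tfrac{1}{2\pi i}\int_{(b)}\zeta(s)\,t^{s}\bigl(s(s+1)\bigr)^{-1}\,ds$ with $\tfrac1t\int_0^t\bigl(\tfrac12-\{u\}\bigr)\,du$: dividing a Mellin kernel by $s+1$ corresponds to the averaging operator $g\mapsto t^{-1}\int_0^t g$, and $\tfrac{1}{2\pi i}\int_{(b)}\zeta(s)u^{s}s^{-1}\,ds=\tfrac12-\{u\}$ is the classical Perron-type evaluation obtained by moving the contour leftward past the poles at $s=1$ and $s=0$. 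The change of variable $u=ny$ then turns this into the first line of (1.5), which is the advertised generalization of Segal's identity; inserting the Fourier expansion $\{ny\}-\tfrac12=-\tfrac1\pi\sum_{k\ge1}k^{-1}\sin(2\pi kny)$, collecting the resulting double sum according to $m=kn$ via $F(m)=\sum_{d\mid m}f(d)$, and integrating $\int_0^{1/x}\sin(2\pi my)\,dy=(1-\cos(2\pi m/x))/(2\pi m)$ produces the closed form in the second line of (1.5). Adding (1.4) to that closed form is precisely the claimed identity.

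I expect the main obstacle to be the term-by-term integration of the Fourier series for $\{ny\}-\tfrac12$ against $\sum_{n}f(n)/n$ over $(0,1/x)$ --- equivalently, the passage of $\int_0^{1/x}$ through the double sum --- for which the mere pointwise convergence of the sawtooth series does not suffice. This is exactly where the hypothesis on the exponential sums $\sum_{n\le N}f(n)e^{2\pi inx}$ enters: after partial summation the partial sums of the trigonometric series are controlled by these exponential sums, and Davenport's special estimate [4] then supplies the uniform bound that makes the interchange legitimate --- this is the ``special estimate'' alluded to just before the statement. A secondary point is that the contour integrals on the lines where absolute convergence fails must be interpreted as the conditionally convergent (truncated) limits already built into (1.2) through the regularizing factor $\bigl(s(s+1)\bigr)^{-1}$. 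Once these convergence questions are settled, the theorem follows by simply collecting (1.3), (1.4), and (1.5).
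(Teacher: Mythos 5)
Your argument is essentially identical to the paper's: Theorem 1.1 is obtained there by exactly the chain (1.2) $\Rightarrow$ (1.3), the substitution $s\mapsto 1-s$ together with the Mellin--Perron formula for (1.4), and the Segal/Davenport-expansion computation (1.5), with Davenport's estimate invoked to justify the term-by-term integration. You supply somewhat more justification than the paper does (Fubini with the convexity bound, the explicit Perron-type evaluation of the $\zeta(s)/(s(s+1))$ kernel, and the collection of the double sum via $F(m)=\sum_{d\mid m}f(d)$), but the decomposition, the key lemmas, and the role of the hypothesis on $\sum_{n\le N}f(n)e^{2\pi i n x}$ are the same.
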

We have therefore proven the connection between Popov's formula and the integral $\int_{0}^{x}\frac{S(y)dy}{y^2}$ alluded to in [11] (see also [4, pg.69]). Perhaps even more fascinating is the connection to Davenport expansions [3, 8] through the sum on the far right hand side of (1.5). This Fourier series is known to be connected to the periodic Bernoulli polynomial $B_2(\{x\})-B_2=(\{x\}^2-\{x\}).$ For relevant material on Davenport expansions connected to Bernoulli polynomials, see [2, 9]. \par Recall that $h(x)\sim g(x)$ means that $\lim_{x\rightarrow\infty}\frac{h(x)}{g(x)}=1.$ Letting $x\rightarrow\infty$ and applying L'Hopsital's rule to Theorem 1, and the Residue Theorem to (1.3)--(1.4), we have the following.

\begin{corollary}  Let $f(n)$ be chosen such that $L(s)$ is analytic for $\Re(s)>1.$ Suppose that $S(x)\sim \Delta(x)$ as $x\rightarrow\infty.$ Then
$$\frac{1}{2}\sum_{n>x}\frac{f(n)}{n^2}\left(\{\frac{n}{x}\}-\{\frac{n}{x}\}^2\right)-\frac{1}{2\pi^2}\sum_{n\ge1}\frac{F(n)}{n^2}(\cos(\frac{2\pi n}{ x})-1) \sim\frac{\Delta(x)}{x^{2}},$$

as $x\rightarrow\infty.$
\end{corollary}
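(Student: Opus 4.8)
The plan is to read the left-hand side of the asymptotic through Theorem 1. The Davenport series $\frac{1}{2\pi^{2}}\sum_{n\ge1}\frac{F(n)}{n^{2}}(\cos(\frac{2\pi n}{x})-1)$ appearing in the corollary is exactly one of the two terms on the right of the identity in Theorem 1, so subtracting it cancels it and leaves the first integral of (1.3), which by (1.4) equals $\frac{1}{2x}\int_{0}^{x}S(y)y^{-2}\,dy$. Hence the corollary is equivalent to the single assertion
$$\frac{1}{2x}\int_{0}^{x}\frac{S(y)}{y^{2}}\,dy\;\sim\;\frac{\Delta(x)}{x^{2}}\qquad(x\to\infty),$$
and the two series on the far left of the statement play no further role; everything reduces to an asymptotic (Tauberian) analysis of this average of $S$.

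For that, I would first put the integral in a tractable closed form by partial summation: since $S(y)=0$ for $y<1$,
$$\int_{0}^{x}\frac{S(y)}{y^{2}}\,dy=\int_{1}^{x}\frac{S(y)}{y^{2}}\,dy=\sum_{n\le x}f(n)\int_{n}^{x}\frac{dy}{y^{2}}=\sum_{n\le x}\frac{f(n)}{n}-\frac{S(x)}{x}.$$
Then one uses the hypothesis $S(y)\sim\Delta(y)$ to replace $S$ by $\Delta$ throughout: form the quotient of $\frac{1}{2x}\int_{1}^{x}S(y)y^{-2}\,dy$ with $\Delta(x)/x^{2}$, clear denominators and differentiate --- this is the ``L'Hopital's rule applied to Theorem 1'' of the statement --- the numerator derivative being $S(x)/x^{2}$ and the denominator's $2\Delta'(x)/x-2\Delta(x)/x^{2}$, and then invoke $S(x)\sim\Delta(x)$ once more to simplify the resulting ratio. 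The complementary ``Residue Theorem applied to (1.3)--(1.4)'' pins down the same main term from the analytic side: shifting the contour in $\frac{1}{2\pi i}\int_{(b)}\frac{x^{-s-1}}{2s(s-1)}L(1-s)\,ds$ rightward past $s=0$ and $s=1$ displays the leading behaviour explicitly, the pole at $s=0$ being governed by the principal part of $L$ near $s=1$, hence by $\Delta$.

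The step I expect to be the real obstacle is precisely this L'Hopital (Cesaro--Stolz type) passage from $S(x)\sim\Delta(x)$ to an asymptotic for the smoothed quantity $\int_{1}^{x}S(y)y^{-2}\,dy$. Differentiating requires $\Delta$ to be differentiable, so one should in fact take $\Delta$ to be a smooth comparison function --- a power of $x$ times powers of $\log x$, as in the intended applications; and, more seriously, L'Hopital's rule yields the claimed limit only if the quotient of derivatives $\frac{\Delta(x)}{2x\Delta'(x)-2\Delta(x)}$ converges, and its value is $1$ only under a regular-variation constraint on $\Delta$ --- for $\Delta(x)=x^{\beta}$ it tends to $\frac{1}{2(\beta-1)}$, so the statement as displayed needs $\beta=\frac{3}{2}$, and in general a comparable regularity/growth hypothesis (together with divergence of $\int_{1}^{x}\Delta(y)y^{-2}\,dy$, so that $\int_{1}^{x}(S(y)-\Delta(y))y^{-2}\,dy$ is negligible against the main term). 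Making those hypotheses on $\Delta$ explicit, and checking that the emerging constant is indeed $1$, is where the argument must be secured; granting that, the remaining steps are the routine bookkeeping sketched above.
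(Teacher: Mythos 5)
Your reduction is exactly the paper's: the paper's entire argument for this corollary is the single sentence preceding it (``Letting $x\rightarrow\infty$ and applying L'Hopital's rule to Theorem 1, and the Residue Theorem to (1.3)--(1.4)''), so subtracting the Fourier series from both sides of Theorem 1 and studying $\frac{1}{2x}\int_0^x S(y)y^{-2}\,dy$ is precisely what is intended, and your partial-summation identity and L'Hopital computation are a faithful --- indeed considerably more explicit --- rendering of that plan.

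The obstacle you flag is genuine, and it is a defect of the statement rather than of your argument. L'Hopital applied to $\bigl(\tfrac{1}{2}\int_1^x S(y)y^{-2}\,dy\bigr)/\bigl(\Delta(x)/x\bigr)$ gives, after replacing $S$ by $\Delta$, the limit of $\Delta(x)/(2x\Delta'(x)-2\Delta(x))$, which equals $1$ only under a special normalization of $\Delta$ (for $\Delta(x)=x^{\beta}$ it is $\tfrac{1}{2(\beta-1)}$, forcing $\beta=\tfrac32$), and the rule is not even applicable when $\Delta(x)/x$ fails to tend to infinity. Concretely, in the paper's own motivating case $f=\Lambda$, $S=\psi$, $\Delta(x)=x$, the left-hand side of the corollary equals $\frac{1}{2x}\int_1^x\psi(y)y^{-2}\,dy\sim\frac{\log x}{2x}$, which is not $\sim\Delta(x)/x^{2}=1/x$; the honest residue computation in (1.3)--(1.4) confirms this, since $L(1-s)$ then has a pole at $s=0$, making $s=0$ a double pole whose residue contributes exactly the $\frac{\log x}{2x}$ term. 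So the corollary as printed does not follow from the indicated method and is false without further hypotheses; the regularity, growth, and normalization conditions on $\Delta$ that you spell out (or a corrected right-hand side such as $\frac{1}{2x}\int_1^x\Delta(y)y^{-2}\,dy$) are exactly what would be needed to make your sketch, and the paper's claim, correct.
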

Notice that the sum on the left hand side of (1.1) is $\sim \frac{1}{x},$ which corresponds to the Prime Number theorem $\sum_{n\le x}\Lambda(n)\sim x$ when coupled with our corollary. \par The integral $\int_{0}^{x}\frac{S(y)dy}{y^2}$ has appeared in many recent works in the analytic theory of numbers. Namely, in the case of the von Mangoldt function $S(x)=\psi(x)=\sum_{n\le x}\Lambda(n),$ see [13], where we find a study of the function
$$\sum_{n\le x}\frac{\Lambda(n)}{n}-\frac{\sum_{n\le x}\Lambda(n)}{x}=\int_{1}^{x}\frac{\psi(y)dy}{y^2}.$$ For the case of the M$\ddot{o}$bius function (i.e. $S(x)=M(x)$ the Mertens function [15, pg.370]), Inoue [7, Corollary 3, $k=2$] gave, under the assumption of the weak Mertens Hypothesis,
$$\int_{1}^{x}\frac{M(y)dy}{y^2}=x^{-\frac{1}{2}}\sum_{\rho}\frac{x^{i\gamma}}{\zeta'(\rho)\rho(\rho-1)}+A(2)+O(x^{-1}).$$
Here $A(2)$ is a constant, and $g(x)=O(h(x))$ means $|g(x)|\le c_1h(x),$ $c_1>0$ a constant. \par We mention there is another form of the Fourier series on the far right side of Theorem 1.1. Note that [15, pg.14, eq.(2.1.5)]
\begin{equation}\{x\}=-\frac{1}{2\pi i}\int_{(c)}\frac{\zeta(s)}{s}x^{s}ds,\end{equation} where $x>0,$ and $0<c<1.$ Integrating, we have that
\begin{equation}\frac{1}{2}\left(\{x\}^2+\lfloor{x\rfloor}\right)=-\frac{1}{2\pi i}\int_{(c)}\frac{\zeta(s)}{s(s+1)}x^{s+1}ds.\end{equation}
Dividing by $x,$ computing the residue at the pole $s=0,$ and inverting the desired series in (1.7), we have
\begin{equation}\sum_{n\ge1}\frac{f(n)}{n}\left(\frac{1}{x2n}\left(\{nx\}^2+\lfloor{nx\rfloor}\right)-\frac{1}{2}\right)=-\frac{1}{2\pi i}\int_{(c-1)}\frac{\zeta(s)}{s(s+1)}x^{s}L(1-s)ds.\end{equation}
Here we used the fact that $\zeta(0)=-\frac{1}{2}.$ Hence, after comparing with our computation (1.5), we have proven the following result.
\begin{theorem} For $x>0,$
$$\sum_{n\ge1}\frac{f(n)}{n}\left(\frac{1}{x2n}\left(\{nx\}^2+\lfloor{nx\rfloor}\right)-\frac{1}{2}\right)=\frac{1}{2x\pi^2}\sum_{n\ge1}\frac{F(n)}{n^2}(\cos(2\pi n x)-1).$$

\end{theorem}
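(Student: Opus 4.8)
The plan is to read the left-hand side off the Mellin representation already established in (1.8) and then match it, through a single change of variable, against the Fourier-series evaluation of the second integral of (1.3) carried out in (1.5). Nothing genuinely new is computed: one only has to keep track of the extra factor $1/x$ and of the passage from $\cos(2\pi n/x)$ to $\cos(2\pi n x)$.

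By (1.8), the left-hand side of the theorem equals $J(x):=-\frac{1}{2\pi i}\int_{(c-1)}\frac{\zeta(s)}{s(s+1)}x^{s}L(1-s)\,ds$ with $0<c<1$, so that the line $(c-1)$ is one of the contours $(b)$, $-1<b<0$, appearing in (1.3)--(1.5). Write $G(y):=-\frac{1}{2\pi i}\int_{(b)}\frac{y^{-s-1}\zeta(s)}{s(s+1)}L(1-s)\,ds$ for the second integral of (1.3) with its parameter renamed $y$; then (1.5) gives $G(y)=\frac{1}{2\pi^{2}}\sum_{n\ge1}\frac{F(n)}{n^{2}}\big(\cos(2\pi n/y)-1\big)$ for $y>0$. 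Taking $b=c-1$ and $y=1/x$, and using $(1/x)^{-s-1}=x^{s+1}=x\cdot x^{s}$, we obtain $G(1/x)=x\,J(x)$ on one side and $G(1/x)=\frac{1}{2\pi^{2}}\sum_{n\ge1}\frac{F(n)}{n^{2}}\big(\cos(2\pi n x)-1\big)$ on the other. Equating and dividing by $x$ yields $J(x)=\frac{1}{2x\pi^{2}}\sum_{n\ge1}\frac{F(n)}{n^{2}}\big(\cos(2\pi n x)-1\big)$, which is the claimed identity after invoking (1.8).

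Alternatively, one can argue directly on the integral in (1.8): since $\Re(s)>-1$ on $(c-1)$, one has $\frac{x^{s}}{s+1}=\frac1x\int_{0}^{x}t^{s}\,dt$, and inserting this and reversing the order of integration reduces the claim to the Davenport/Segal identity $-\frac{1}{2\pi i}\int_{(c-1)}\frac{\zeta(s)}{s}t^{s}L(1-s)\,ds=-\frac1\pi\sum_{n\ge1}\frac{F(n)}{n}\sin(2\pi n t)$ — obtained by expanding $L(1-s)$ termwise, using (1.6) together with $\zeta(0)=-\tfrac12$ to identify each term as $\{nt\}-\tfrac12$, applying the Fourier expansion of the sawtooth, and collapsing the double sum via $F(n)=\sum_{d\mid n}f(d)$. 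Integrating term by term over $t\in(0,x)$ with $\int_{0}^{x}\sin(2\pi n t)\,dt=\frac{1-\cos(2\pi n x)}{2\pi n}$ then reproduces the same formula.

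I expect the only real obstacle to be the customary one: justifying the interchanges — the termwise expansion of $L(1-s)$ inside the contour integral, and the term-by-term integration of the (conditionally convergent) Fourier series — uniformly in $x>0$. This is exactly the convergence point flagged in the passage leading up to Theorem 1.1, and it is absorbed into the standing hypothesis on $f$, namely Davenport's estimate $\sum_{n\le N}f(n)e^{2\pi i nx}=O\big(x(\log x)^{-h}\big)$; the change-of-variable route additionally requires the evaluation (1.5) to be valid for every positive value of its parameter, which is what legitimizes the substitution $y=1/x$ for all $x>0$.
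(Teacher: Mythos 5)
Your main argument is correct and is essentially the paper's own proof: the paper likewise obtains the identity by reading the left-hand side off (1.8) and comparing the resulting contour integral with the evaluation (1.5), the only work being the substitution $x\mapsto 1/x$ and the bookkeeping of the factor $x^{s+1}=x\cdot x^{s}$, which you carry out explicitly (and your alternative route via $\frac{x^{s}}{s+1}=\frac{1}{x}\int_{0}^{x}t^{s}\,dt$ is just an unwound version of the same comparison). Your closing remarks on the convergence hypotheses match the caveats the paper itself attaches to (1.5).
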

\section{Solution to The N.J. Fine query}
In [1], a positive answer was presented to a query of N.J. Fine, who asked for a continuous function $\varphi(x)$ on $\mathbb{R},$ with period $1,$ $\varphi(x)\neq-\varphi(-x),$ and 
\begin{equation}\sum_{N\ge k\ge1}\varphi(\frac{k}{N})=0.\end{equation} Namely, they gave the solutions
\begin{equation}\sum_{n\ge1}\frac{f(n)}{n}\cos(2\pi n x),\end{equation}
where $f(n)$ is chosen as the M$\ddot{o}$bius function $\mu(n)$ and the Liouville function $\lambda(n),$ [14]. Their proof utilizes a Ramanujan sum [15, pg.10]
\begin{equation} \sum_{N\ge k\ge1}\cos(\frac{2\pi k n}{N}),\end{equation} which is $N$ if $n\equiv0\pmod{N}$ and $0$ otherwise. It is also dependent on $\sum_{n\ge1}f(n)/n=0.$ In fact, it is possible to further generalize their result using these properties, which we offer in the following. 

\begin{theorem} Suppose $f(n)$ is a multiplicative arithmetic function chosen such that $\sum_{n\ge1}f(n)/n=0.$ Then 
\begin{equation}\sum_{n\ge1}\frac{f(n)}{n}\cos^m(\pi n x),\end{equation}
and
\begin{equation}\sum_{n\ge1}\frac{f(n)}{n}\sin^{2m}(\pi n x),\end{equation}
for each positive integer $m$ satisfy the properties in Fine's query.
\end{theorem}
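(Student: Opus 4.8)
The plan is to linearize the powers and then adapt the Ramanujan-sum argument of [1] to the resulting elementary pieces. I would first apply the power--reduction formulas
$$\cos^{m}\theta=\frac{1}{2^{m}}\sum_{k=0}^{m}\binom{m}{k}\cos\!\big((m-2k)\theta\big),\qquad \sin^{2m}\theta=\frac{1}{2^{2m}}\binom{2m}{m}+\frac{1}{2^{2m-1}}\sum_{k=0}^{m-1}(-1)^{m-k}\binom{2m}{k}\cos\!\big(2(m-k)\theta\big),$$
with $\theta=\pi n x$, so that each of the series (2.4), (2.5) becomes a finite linear combination $\varphi(x)=\sum_{\ell}c_{\ell}\Psi_{\ell}(x)$ of the elementary series $\Psi_{\ell}(x):=\sum_{n\ge1}\frac{f(n)}{n}\cos(\ell\pi nx)$, where $\ell$ ranges over a finite set of positive integers (of the parity of $m$ in the cosine case, all even in the sine case). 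The constant term in each identity contributes only $(\mathrm{const})\cdot\sum_{n\ge1}f(n)/n=0$, so it may be discarded at the outset.

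It then suffices to verify Fine's three conditions for a single $\Psi_{\ell}$ and add up. Continuity follows as in [1]: writing $\cos(\ell\pi nx)=\Re\,e^{2\pi in(\ell x/2)}$ and summing by parts against the Davenport-type bound $\sum_{n\le N}f(n)e^{2\pi iny}=O(N(\log N)^{-h})$ available for the functions in question, the series for $\Psi_{\ell}$ converges uniformly in $x$, so $\varphi$ is a finite sum of continuous functions. For periodicity, $\Psi_{\ell}(x+2)=\Psi_{\ell}(x)$ always, and $\Psi_{\ell}(x+1)=\Psi_{\ell}(x)$ when $\ell$ is even; hence $\varphi$ has period $1$ in the sine case (and when $m$ is even in the cosine case) and period $2$ in general. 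For the parity requirement, $\cos^{m}$ and $\sin^{2m}$ are even, so $\varphi$ is even; and $\varphi\not\equiv0$ because the coefficient of the lowest frequency occurring in $\varphi$ equals $c_{\ell_{\min}}f(1)=c_{\ell_{\min}}\neq0$. Since an even function that is not identically zero cannot satisfy $\varphi(x)=-\varphi(-x)$ except where it vanishes, the condition $\varphi(x)\neq-\varphi(-x)$ holds.

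The substance of the proof is the identity $\sum_{N\ge k\ge1}\Psi_{\ell}(k/N)=0$. Interchanging the uniformly convergent sum with the finite sum over $k$, I would insert the elementary closed form
$$\sum_{k=1}^{N}\cos\!\Big(\frac{\ell\pi nk}{N}\Big)=\begin{cases}N,& 2N\mid\ell n,\\ 0,& \ell n\ \text{even},\ 2N\nmid\ell n,\\ -1,& \ell n\ \text{odd},\end{cases}$$
whose even branch is the Ramanujan sum (2.3) after rescaling. This reduces $\sum_{k=1}^{N}\Psi_{\ell}(k/N)$ to a finite combination of divisor-restricted series of the shapes $\sum_{d\mid n}f(n)/n$ and $\sum_{\gcd(n,2)=1}f(n)/n$, and it remains to show each of these vanishes. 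This is the one place multiplicativity is essential: expanding $\sum_{n}f(n)n^{-s}$ as an Euler product, the hypothesis $\sum_{n}f(n)/n=0$ says precisely that this product is $0$ at $s=1$; removing the finitely many Euler factors attached to the primes dividing a given modulus $M$ then gives $\sum_{\gcd(n,M)=1}f(n)/n=0$ and $\sum_{M\mid n}f(n)/n=0$, after an Abelian passage $s\to1^{+}$ from $\Re(s)>1$ to $s=1$ (permissible because the restricted series inherit convergence at $s=1$, classically for $\mu$ and $\lambda$). Summing over $\ell$ yields $\sum_{N\ge k\ge1}\varphi(k/N)=\sum_{\ell}c_{\ell}\cdot0=0$.

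I expect this last point to be the main obstacle. It is the only step that actually uses the multiplicativity of $f$, and since the Dirichlet series involved converge only conditionally the Euler-product manipulation must be carried out together with an Abelian (or Tauberian) argument rather than formally; for $f=\mu$ and $f=\lambda$ all of this is classical (for instance $\sum_{\gcd(n,M)=1}\mu(n)/n=\prod_{p\nmid M}(1-1/p)=0$), and one should impose just enough regularity on $f$ — or on its Dirichlet series near $s=1$ — for the same to go through. The linearization bookkeeping in the first paragraph and the trigonometric-sum evaluation in the third are routine by comparison.
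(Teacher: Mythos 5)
Your proposal follows the same route as the paper's proof: linearize $\cos^m$ and $\sin^{2m}$ by the power--reduction formulas of [6, 1.320], evaluate the inner sum over $k$ as a Ramanujan-type sum, and kill what survives using $\sum_{n\ge1}f(n)/n=0$. The difference is that you push the bookkeeping far enough to hit two points that the paper's proof passes over, and both of your hesitations are legitimate features of the problem rather than defects of your argument. First, for odd $m$ the linearization of $\cos^m(\pi nx)$ produces only odd multiples of $\pi nx$, so each term satisfies $\psi(x+1)=(-1)^{n}\psi(x)$ and the series has period $2$, not $1$; your $-1$ branch of the closed form for $\sum_{k=1}^{N}\cos(\ell\pi nk/N)$ is the same phenomenon. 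The paper substitutes into (2.7) and treats every resulting sum as the Ramanujan sum (2.3), which is only valid for the even-frequency terms; condition (ii) of Fine's query genuinely fails for odd $m$ (e.g.\ $m=1$, $f=\mu$: the coefficient of $\cos(\pi x)$ in $\varphi(x+1)-\varphi(x)$ is $-2\mu(1)\neq0$), so the cosine half of the statement should be read with $m$ even. Second, after the Ramanujan sum the surviving terms are divisor-restricted sums $\sum_{N'\mid n}f(n)/n$ with $N'=N/\gcd(c,N)$ for the various integer frequencies $c$, not multiples of $\sum_{n}f(n)/n$ itself; the paper's phrase ``independent of $n$'' elides this. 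For completely multiplicative $f$ one has $\sum_{N'\mid n}f(n)/n=\frac{f(N')}{N'}\sum_{m\ge1}f(m)/m=0$ termwise and there is nothing to prove, but for merely multiplicative $f$ your Euler-product/Abelian step is genuinely needed and cannot be made unconditional: a multiplicative $f$ whose Euler factor at $2$ vanishes while $\prod_{p>2}$ converges to a nonzero value has $\sum_{n}f(n)/n=0$ yet $\sum_{2\mid n}f(n)/n\neq0$, so the hypothesis of the theorem as stated is too weak. (Continuity is in the same boat: for arbitrary multiplicative $f$ with $\sum f(n)/n=0$ the series need not even converge, so some regularity of the kind you invoke must be assumed.) In short, you are not missing anything that the paper supplies; the places where you stall are exactly the places where the statement needs an added hypothesis --- complete multiplicativity, or the divergence-to-zero of the Euler product enjoyed by $\mu$ and $\lambda$ --- for either proof to close.
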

\begin{proof} We will use [6, pg.31, section 1.320, no.5 and 7] for (2.4) and (2.5) to obtain our $\varphi(x).$ Namely
\begin{equation} \cos^{2m}(x)=\frac{1}{2^{2m}}\left(\sum_{m-1\ge k\ge0}2\binom{2m}{k}\cos(2(m-k)x)+\binom{2m}{m}\right),\end{equation}
\begin{equation} \cos^{2m-1}(x)=\frac{1}{2^{2m-2}}\sum_{m-1\ge k\ge0}\binom{2m}{k}\cos((2m-2k-1)x).\end{equation}
Putting $x=\frac{2\pi l n}{N},$ and summing over $N\ge l\ge1$ we see that the sum is a linear combination of zeros and $N$'s depending on weather $n(m-k)|N.$ In the case where $n(m-k)|N,$ we are left with a linear combination of terms which are independent of $n.$ The last term is simply $N\frac{1}{2^{2m}}\binom{2m}{m}.$ Therefore, summing over $n$ gives the result upon invoking $\sum_{n\ge1}f(n)/n=0.$ Similar arguments apply to (2.7).
Since (2.5) follows in the same way using another formula from [6, pg.31, section 1.320, no.1], we leave the details to the reader.
\end{proof}

\par We were interested finding more solutions to Fine's query by constructing a special arithmetic function with the possible property $\sum_{n\ge1}f(n)/n\neq0.$ Define \begin{equation}\chi_{m,l}^{\pm}(n):=\begin{cases} \pm(m^{l}\mp1),& \text {if } n=0\pmod{m},\\ 1, & \text{if } n\neq0\pmod{m}.\end{cases}\end{equation}
If $f(n)$ is completely multiplicative, this tells us that $$\sum_{n\ge1}\frac{\chi_{m,l}^{-}(n)f(n)}{n^{s}}=L(s)-m^{l}\sum_{n\equiv0\pmod{m}}\frac{f(n)}{n^{s}}=(1-f(m)m^{l-s})L(s).$$
\bf Definition: \rm A function is said to be of the class $\aleph$ if: (i) it is continuous on $\mathbb{R},$ (ii) is $1$-periodic (iii) is not odd, and (iv) satisfies (2.1) for each $N$ coprime to $m.$

\begin{theorem}  Suppose that $L(s)$ is analytic for $\Re(s)>1.$ For natural numbers $m>1,$ $l>1,$ and $N$ is coprime to $m,$ we have $D_1(x)\in\aleph$ where
\begin{equation}D_1(x)=\sum_{n\ge1}\frac{\chi_{m,l}^{+}(n)f(n)}{n^{l}}\cos(2\pi n x),\end{equation}
for a completely multiplicative function with the property $f(m)=-1,$ and $D_2(x)\in\aleph$ where
\begin{equation}D_2(x)=\sum_{n\ge1}\frac{\chi_{m,l}^{-}(n)f(n)}{n^l}\cos(2\pi n x),\end{equation}
for a completely multiplicative function with the property $f(m)=1.$
\end{theorem}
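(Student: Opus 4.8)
The plan is to verify, for $D_1$ and $D_2$ simultaneously (they differ only by a sign in the coefficient on the multiples of $m$, together with the companion hypothesis $f(m)=-1$ for $D_1$ and $f(m)=1$ for $D_2$), the four defining properties of the class $\aleph$. Write $c_n=\chi_{m,l}^{\pm}(n)f(n)/n^l$, so that $D_j(x)=\sum_{n\ge1}c_n\cos(2\pi nx)$ for $j\in\{1,2\}$. For (i) and (ii): since $|\chi_{m,l}^{\pm}(n)|\le m^l+1$ and $f$ is bounded --- as with the motivating examples $\mu$ and $\lambda$ I take $|f(p)|\le1$, hence $|f(n)|\le1$ --- we have $|c_n|\le(m^l+1)n^{-l}$, and as $l>1$ is an integer (so $l\ge2$) the series converges absolutely and uniformly on $\mathbb{R}$ by the Weierstrass $M$-test; hence $D_1,D_2$ are continuous, each is $1$-periodic because $n\in\mathbb{Z}$, and the same bound justifies the term-by-term manipulations below. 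For (iii): $\cos$ is even, so $D_j(-x)=D_j(x)$, and an even function is odd only if it vanishes identically; by uniform convergence the cosine expansion is the Fourier series of $D_j$, so $D_j\equiv0$ would force its first coefficient $c_1=\chi_{m,l}^{\pm}(1)f(1)=1$ to vanish, which is absurd. Thus $D_1,D_2$ are not odd.

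The substance is (iv). Fix $N$ coprime to $m$. Interchanging summation and using (2.3), which equals $N$ if $N\mid n$ and $0$ otherwise, gives $\sum_{k=1}^{N}D_j(k/N)=N\sum_{N\mid n}c_n=N\sum_{t\ge1}c_{Nt}$. Two facts now collapse this sum: $\gcd(N,m)=1$ forces $m\mid Nt\iff m\mid t$, hence $\chi_{m,l}^{\pm}(Nt)=\chi_{m,l}^{\pm}(t)$; and complete multiplicativity gives $f(Nt)=f(N)f(t)$. Therefore $c_{Nt}=\frac{f(N)}{N^l}\cdot\frac{\chi_{m,l}^{\pm}(t)f(t)}{t^l}$ and
\[
\sum_{k=1}^{N}D_j(k/N)=\frac{f(N)}{N^{\,l-1}}\sum_{t\ge1}\frac{\chi_{m,l}^{\pm}(t)f(t)}{t^l}=\frac{f(N)}{N^{\,l-1}}\,D_j(0).
\]
So it suffices that $D_j(0)=0$. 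For $D_2$, $\sum_{t\ge1}\chi_{m,l}^{-}(t)f(t)t^{-l}$ is the Dirichlet-series identity following (2.8) evaluated at $s=l$, namely $\bigl(1-f(m)m^{\,l-s}\bigr)L(s)\big|_{s=l}=(1-f(m))L(l)$, which vanishes since $f(m)=1$. For $D_1$ the same computation with $\chi_{m,l}^{+}$ (again equal to $1$ off the multiples of $m$) gives the analogous $\bigl(1+f(m)m^{\,l-s}\bigr)L(s)\big|_{s=l}=(1+f(m))L(l)=0$ since $f(m)=-1$; here $L(l)$ is finite because $l>1$ and $L$ is analytic on $\Re(s)>1$. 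Hence $\sum_{k=1}^{N}D_j(k/N)=0$ for every $N$ coprime to $m$, which establishes (iv) and the theorem.

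I expect the obstacles to be bookkeeping rather than conceptual. The delicate point is aligning the sign convention in $\chi_{m,l}^{\pm}$ with the Euler-type factorization $\bigl(1\pm f(m)m^{\,l-s}\bigr)L(s)$, so that the hypothesis $f(m)=\mp1$ is exactly what kills that factor at $s=l$; one must also check that the interchange of sums and the appeal to uniqueness of cosine coefficients are licensed by the absolute convergence provided by $l\ge2$. The only gap worth flagging is the standing assumption that $f$ is bounded: analyticity of $L$ on $\Re(s)>1$ alone need not bound $f(n)$ strongly enough when $l$ is close to $1$, so I would add the explicit hypothesis $|f(p)|\le1$, which holds for the functions in the examples.
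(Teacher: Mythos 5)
Your proof is correct and follows essentially the same route as the paper's: interchange the two sums, apply the Ramanujan-sum evaluation (2.3), use coprimality of $N$ and $m$ together with complete multiplicativity to pull out $f(N)/N^{l-1}$, and annihilate the remaining Dirichlet series at $s=l$ via the Euler-type factor $(1\mp m^{l-s})L(l)=0$. You are in fact more careful than the paper on the two points it leaves implicit --- the boundedness of $f$ needed for absolute convergence and for checking properties (i)--(iii) of the class $\aleph$, and the sign convention in (2.8), which as literally written gives $\chi_{m,l}^{\pm}(n)=\pm m^{l}-1$ on multiples of $m$ and is therefore inconsistent with the factorization $(1\mp f(m)m^{l-s})L(s)$ that both you and the paper's own proof actually rely on; your reading ($\pm m^{l}+1$) is the one that makes the theorem true.
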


\begin{proof} First we consider (2.9). Note that because $\chi_{m,l}^{\pm}(n)$ is not completely multiplicative, we need to restrict $N$ to be coprime to $m,$ since then $\chi_{m,l}^{\pm}(Nn)=\chi_{m,l}^{\pm}(n).$ That is to say that $Nn\equiv0\pmod{m}$ is solved by $n\equiv0\pmod{m}$ provided that $N$ is coprime to $m.$ The same argument applies in the case $Nn\not\equiv0\pmod{m}.$ Using (2.3) and the method applied in [1] we compute that 
$$\begin{aligned}\sum_{N\ge k \ge1}\sum_{n\ge1}\frac{\chi_{m,l}^{+}(n)f(n)}{n^{l}}\cos(\frac{2\pi n k}{N}) &=N\sum_{n\equiv0\mod{N}}\frac{\chi_{m,l}^{+}(n)f(n)}{n^{l}} \\ 
&=\frac{f(N)}{N^{l-1}}\lim_{s\rightarrow l}(1-m^{l-s})L(s)=0. \end{aligned}.$$
The computation for (2.10) is similar, and so we leave the details for the reader.
\end{proof}
An example for $D_2(x)$ is if $f(n)=\lambda(n),$ and $m=4,$ $l>1,$ since $\lambda(4)=1.$ One for $D_1(x)$ is if $f(n)=\mu(n),$ and $m=5,$ $l>1,$ since $\mu(5)=-1.$

\section{Fourier Analysis of Davenport expansions}
In [8, pg.280--281], it is noted that Davenport's expansion may be obtained from standard Fourier techniques, and evaluating a Fourier integral involving the fractional part function. Here we will give a detailed proof to obtain a further expansion.
\begin{lemma} Let $\phi(x)$ be a $1$-periodic function on $[0,1].$ Then $\phi(x)$ admits the expansion
$$\phi(x)=\sum_{n\ge1}c_n\sin(\pi n x) \cos(\pi nx),$$
where $$c_n=8\int_{0}^{1}\phi(y)\sin(\pi n y)\cos(\pi n y)dy.$$
\end{lemma}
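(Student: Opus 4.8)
The plan is to recognize that $\sin(\pi n x)\cos(\pi n x) = \tfrac12\sin(2\pi n x)$, so the claimed expansion is nothing but the classical Fourier sine series of $\phi$ in disguise. Writing $\phi(x) = \sum_{n\ge1} c_n \sin(\pi n x)\cos(\pi n x) = \tfrac12\sum_{n\ge1} c_n \sin(2\pi n x)$, I would compare with the standard sine expansion $\phi(x) = \sum_{n\ge1} b_n \sin(2\pi n x)$ where $b_n = 2\int_0^1 \phi(y)\sin(2\pi n y)\,dy$. Matching coefficients forces $c_n = 2 b_n = 4\int_0^1 \phi(y)\sin(2\pi n y)\,dy = 8\int_0^1 \phi(y)\sin(\pi n y)\cos(\pi n y)\,dy$, which is exactly the asserted formula for $c_n$.

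The concrete steps I would carry out, in order, are: (i) apply the product-to-sum identity $\sin\theta\cos\theta = \tfrac12\sin 2\theta$ to rewrite both the proposed series and the integrand for $c_n$; (ii) invoke the standard $L^2[0,1]$ Fourier theory for $1$-periodic functions (or the real Fourier series on $[0,1]$) to assert that a suitable $\phi$ has a sine-cosine expansion with the usual Euler–Fourier coefficients — here I would implicitly assume $\phi$ is, say, piecewise continuous and of bounded variation, or at least that its Fourier series represents it, mirroring the hypotheses used elsewhere in the paper; (iii) isolate the sine part of that expansion (the cosine part and constant term would have to be argued away — see below); and (iv) substitute back and collect the factor of $8$ from the two applications of the double-angle identity plus the factor $2$ in the Euler coefficient.

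The main obstacle is that a completely general $1$-periodic $\phi$ has a full Fourier expansion with cosine terms and a constant term, so the bare statement $\phi(x) = \sum_{n\ge1} c_n \sin(\pi n x)\cos(\pi n x)$ cannot hold unless one restricts to functions whose cosine coefficients (and mean) vanish — effectively the odd functions about the relevant symmetry point, or functions extended oddly from $[0,\tfrac12]$. I expect the intended reading is that $\phi$ is assumed odd (equivalently, $\phi(1-x) = -\phi(x)$, or the expansion is taken in that subspace), in which case only sine harmonics survive and the computation in steps (i)–(iv) goes through cleanly. I would state that hypothesis explicitly at the start of the proof, then the rest is the short double-angle bookkeeping above; the convergence of the resulting series is inherited verbatim from the classical Fourier convergence theorem under whatever regularity assumption is placed on $\phi$.
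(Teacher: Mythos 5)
Your argument is, at bottom, the same as the paper's: the paper verifies the orthogonality relation $\int_0^1 \sin(\pi n y)\cos(\pi n y)\sin(\pi m y)\cos(\pi m y)\,dy = \tfrac18\delta_{nm}$ by direct computation and then declares the expansion to follow, and this orthogonality is precisely the standard orthogonality of $\{\sin(2\pi n y)\}$ that you reach via the double-angle identity. Your route through $\sin\theta\cos\theta=\tfrac12\sin 2\theta$ is cleaner bookkeeping and makes the factor of $8$ transparent, but no new idea is involved in deriving the coefficient formula.

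Where you genuinely add value is in flagging the completeness issue, and you are right to do so: the system $\{\sin(2\pi n x)\}_{n\ge1}$ is not complete in $L^2[0,1]$ (it misses the constants and all the cosines), so the lemma as stated is false for a general $1$-periodic $\phi$ --- take $\phi\equiv 1$, for which every $c_n$ vanishes. The paper's own proof has exactly this gap: computing the orthogonality relation only shows that \emph{if} $\phi$ admits such an expansion, the coefficients must be the stated ones; it does not show the expansion exists. Your proposed repair --- restrict to $\phi$ with $\phi(1-x)=-\phi(x)$ (oddness about $x=\tfrac12$), plus a regularity hypothesis guaranteeing pointwise Fourier convergence --- is the correct one, and it is satisfied by the function $\sum_{n\ge1}\frac{f(n)}{n}\bigl(\{nx\}-\tfrac12\bigr)$ to which the lemma is actually applied in Theorem 3.3, since $\{-y\}=1-\{y\}$ for non-integer $y$. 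So your proof is sound once the added hypothesis is stated, and it repairs a defect that the paper's proof leaves unaddressed.
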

\begin{proof} A computation gives us
$$\begin{aligned}\int_{0}^{1}\sin(\pi n y)\cos(\pi n y)\sin(\pi m y)\cos(\pi m y)dy &= \frac{1}{16\pi}\left(\frac{\sin(2\pi(m-n))}{m-n}-\frac{\sin(2\pi(m+n))}{m+n} \right)\\ &=\begin{cases} \frac{1}{8},& \text {if } n=m,\\ 0, & \text{if } n\neq m.\end{cases}\end{aligned}$$
Hence, provided $\phi(x)$ satisfies the hypothesis of the Lemma, we find the result follows.
\end{proof}
We also require a formula to evaluate integrals involving the fractional function.
\begin{lemma} ([15, pg.13] ) Suppose $\phi(x)$ has a continuous derivative in $[a,b].$ Then we have,
\begin{equation}\sum_{a<n\le b}\phi(n)=\int_{a}^{b}\phi(y)dy+\int_{a}^{b}(\{y\}-\frac{1}{2})\phi'(y)dy+(\{a\}-\frac{1}{2})\phi(a)-(\{b\}-\frac{1}{2})\phi(b). \end{equation}
\end{lemma}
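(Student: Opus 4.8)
The plan is to deduce (3.1) from a single integration by parts on $[a,b]$, the one subtlety being that the sawtooth function $P(y):=\{y\}-\tfrac12$ is discontinuous. Indeed $P$ is right-continuous and at every integer $n$ it drops by $1$, since $P(y)\to\tfrac12$ as $y\to n^{-}$ whereas $P(n)=P(n^{+})=-\tfrac12$. It is precisely this unit jump at each integer that converts the integral $\int_a^b P(y)\phi'(y)\,dy$ into the finite sum $\sum_{a<n\le b}\phi(n)$, so the whole proof amounts to making that bookkeeping precise.

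First I would reduce to the case that neither $a$ nor $b$ is an integer, the remaining cases following by a direct limiting argument (with the convention $\{n\}=0$, under which both sides of (3.1) are right-continuous in each endpoint). Set $N=\lfloor a\rfloor+1$ and $M=\lfloor b\rfloor$, and partition $[a,b]$ by the integers $N<N+1<\cdots<M$ it contains; if there are none, one simply works on $[a,b]$ directly. On each closed piece $[c,d]$ of this partition, $P$ coincides with an affine function of slope $1$, hence is continuously differentiable there, and integration by parts with $u=P$, $dv=\phi'(y)\,dy$, $v=\phi$ gives
$$\int_c^d P(y)\phi'(y)\,dy=P(d^{-})\phi(d)-P(c^{+})\phi(c)-\int_c^d\phi(y)\,dy,$$
where $P(c^{+})$ and $P(d^{-})$ denote the one-sided values of $P$ on $[c,d]$.

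Summing these identities over all pieces, the integrals reassemble to $-\int_a^b\phi$, and at each interior integer $n\in\{N,\dots,M\}$ the two adjacent pieces contribute $P(n^{-})\phi(n)-P(n^{+})\phi(n)=\bigl(\tfrac12-(-\tfrac12)\bigr)\phi(n)=\phi(n)$, so that these terms combine to $\sum_{a<n\le b}\phi(n)$. The only terms that fail to telescope are $-P(a)\phi(a)$ at the left end and $P(b)\phi(b)$ at the right end, where $P(a),P(b)$ are genuine values because $a,b$ are not integers. Collecting everything yields
$$\int_a^b P(y)\phi'(y)\,dy=\sum_{a<n\le b}\phi(n)-\int_a^b\phi(y)\,dy-P(a)\phi(a)+P(b)\phi(b),$$
and substituting $P(y)=\{y\}-\tfrac12$ and rearranging gives (3.1) exactly. (Equivalently, one may first record the trapezoidal identity $\int_n^{n+1}\phi(y)\,dy+\int_n^{n+1}(\{y\}-\tfrac12)\phi'(y)\,dy=\tfrac12(\phi(n)+\phi(n+1))$ on each unit interval and then sum, handling the two fractional end-intervals separately.)

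The argument is elementary, and the only real obstacle is careful bookkeeping: tracking the one-sided values of $P$ at the partition points so that each $\phi(n)$ with $a<n\le b$ emerges with coefficient exactly $1$, and verifying that the degenerate configurations — $a$ or $b$ an integer, or $a$ and $b$ lying in a common unit interval — are consistent with the stated formula.
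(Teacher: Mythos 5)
Your proof is correct. Note, however, that the paper does not prove this lemma at all: it is quoted verbatim from Titchmarsh [15, p.~13] (it is the first-order Euler summation formula), so there is no argument in the paper to compare yours against. What you give --- integration by parts of $\int_a^b(\{y\}-\tfrac12)\phi'(y)\,dy$ on each unit subinterval, with the unit jumps of the sawtooth at the integers producing the terms $\phi(n)$ for $a<n\le b$ and the non-telescoping endpoint contributions producing $(\{a\}-\tfrac12)\phi(a)-(\{b\}-\tfrac12)\phi(b)$, followed by a right-continuity limiting argument when $a$ or $b$ is an integer --- is exactly the standard textbook derivation, and all the bookkeeping (interior jump of size $1$, the empty-partition case, the sign of the endpoint terms) checks out.
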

Noting that $\sin(2x)=2\sin(x)\cos(x),$ it follows that we should have the following Davenport expansion.
\begin{theorem} We have,
$$\sum_{n\ge1}\frac{f(n)}{n}(\{nx\}-\frac{1}{2})=2\sum_{n\ge1}\bar{c}_n\sin(\pi n x) \cos(\pi nx),$$
where
$$\bar{c}_n= -\frac{1}{n\pi}\sum_{d|n}f(d).$$
\end{theorem}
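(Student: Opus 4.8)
The plan is to apply Lemma 3.1 to the function $\phi(x)=\sum_{m\ge1}\frac{f(m)}{m}\left(\{mx\}-\frac{1}{2}\right)$. Each summand has period $1/m$, hence period $1$, and is odd because $\{m(-x)\}-\frac{1}{2}=-\left(\{mx\}-\frac{1}{2}\right)$ off a discrete set; since $\sin(\pi nx)\cos(\pi nx)=\frac{1}{2}\sin(2\pi nx)$, the expansion of Lemma 3.1 is a sine series and is the appropriate tool for such $\phi$. It gives $\phi(x)=\sum_{n\ge1}c_n\sin(\pi nx)\cos(\pi nx)$ with $c_n=8\int_{0}^{1}\phi(y)\sin(\pi ny)\cos(\pi ny)\,dy$, and since the claimed right side equals $2\sum_{n\ge1}\bar c_n\sin(\pi nx)\cos(\pi nx)$, it suffices to prove $c_n=2\bar c_n$.

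To compute $c_n$ I would interchange the $m$-sum with the integral and write $c_n=8\sum_{m\ge1}\frac{f(m)}{m}I_{m,n}$, where
$$I_{m,n}=\int_{0}^{1}\left(\{my\}-\frac{1}{2}\right)\sin(\pi ny)\cos(\pi ny)\,dy=\frac{1}{2}\int_{0}^{1}\left(\{my\}-\frac{1}{2}\right)\sin(2\pi ny)\,dy.$$
The substitution $u=my$ turns the last integral into $\frac{1}{2m}\int_{0}^{m}\left(\{u\}-\frac{1}{2}\right)\sin\left(\frac{2\pi nu}{m}\right)du$, which is exactly the shape handled by Lemma 3.2: rearranging (3.1) on $[0,m]$ with the smooth function $g(u)=-\frac{m}{2\pi n}\cos\left(\frac{2\pi nu}{m}\right)$ (so that $g'(u)=\sin\left(\frac{2\pi nu}{m}\right)$) expresses $\int_{0}^{m}\left(\{u\}-\frac{1}{2}\right)g'(u)\,du$ in terms of the sum $\sum_{0<j\le m}g(j)$, the elementary integral $\int_{0}^{m}g$, and the two boundary terms.

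These four ingredients evaluate cleanly. The sum $\sum_{0<j\le m}g(j)=-\frac{m}{2\pi n}\sum_{j=1}^{m}\cos\left(\frac{2\pi nj}{m}\right)$ is a Ramanujan sum of the form (2.3), equal to $-\frac{m^{2}}{2\pi n}$ when $m\mid n$ and to $0$ otherwise; $\int_{0}^{m}g(u)\,du=0$ since $\sin(2\pi n)=0$; and the boundary terms $(\{0\}-\frac{1}{2})g(0)$ and $(\{m\}-\frac{1}{2})g(m)$ both equal $\frac{m}{4\pi n}$ (using $\cos(2\pi n)=1$), so they enter (3.1) with opposite signs and cancel. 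Hence $\int_{0}^{m}\left(\{u\}-\frac{1}{2}\right)\sin\left(\frac{2\pi nu}{m}\right)du$ equals $-\frac{m^{2}}{2\pi n}$ for $m\mid n$ and $0$ otherwise, so $I_{m,n}=-\frac{m}{4\pi n}$ for $m\mid n$ and $0$ otherwise. Summing gives $c_n=8\sum_{m\mid n}\frac{f(m)}{m}\left(-\frac{m}{4\pi n}\right)=-\frac{2}{\pi n}\sum_{d\mid n}f(d)=2\bar c_n$, as required.

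The step I expect to be the real obstacle is justifying the interchange of the $m$-sum with the integral, equivalently the convergence of the Davenport-type series defining $\phi$ and the legitimacy of applying Lemma 3.1 term by term: that series converges only conditionally, so one must invoke an estimate of the form $\sum_{m\le N}f(m)e^{2\pi imx}=O(x(\log x)^{-h})$ -- the same hypothesis underlying Theorem 1.1 -- to control tails uniformly. Everything after that is mechanical, with the identity $\sin(2x)=2\sin x\cos x$ reconciling the $\sin\cos$ normalization of Lemma 3.1 with the classical form of the Davenport expansion.
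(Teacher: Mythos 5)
Your proposal is correct and follows essentially the same route as the paper: expand via Lemma 3.1, reduce everything to the integral $\int_{0}^{1}(\{Ny\}-\tfrac{1}{2})\sin(\pi my)\cos(\pi my)\,dy$, and evaluate that integral by Lemma 3.2 combined with the Ramanujan sum (2.3), obtaining $-\tfrac{N}{4\pi m}$ when $N\mid m$ and $0$ otherwise, which gives $c_n=2\bar{c}_n$. The only cosmetic difference is the test function fed into Lemma 3.2 (the paper uses $\sin^{2}(\pi xm/N)$ on $[0,N]$, you substitute $u=my$ and use a cosine antiderivative), and your closing caveat about justifying the sum--integral interchange is a point the paper leaves implicit.
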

\begin{proof} In Lemma 3.2, we put $a=0,$ $b=N,$ and set $\phi(x)=\sin(\frac{2\pi x m}{N})^2.$ In this case we have
\begin{equation}\begin{aligned}\sum_{0<n\le N}\sin(\frac{\pi n m}{N})^2 &=\frac{N}{2}+\frac{2\pi m}{N}\int_{0}^{N}(\{y\}-\frac{1}{2})\sin(\frac{\pi y m}{N})\cos(\frac{\pi y m}{N})dy \\
&=\frac{N}{2}+2\pi m\int_{0}^{1}(\{yN\}-\frac{1}{2})\sin(\pi y m)\cos(\pi y m)dy
 \end{aligned}\end{equation}
The sum on the left side of (3.2) may be evaluated in the same way as (2.3) to find that 
\begin{equation} \sum_{0<n\le N}\sin(\frac{\pi n m}{N})^2=\begin{cases} 0, & \text {if } m\equiv0\pmod{N},\\ \frac{N}{2}, & \text{} otherwise.\end{cases}\end{equation}
Collecting (3.2) and (3.3), it follows that
\begin{equation}\int_{0}^{1}(\{yN\}-\frac{1}{2})\sin(\pi y m)\cos(\pi y m)dy=\begin{cases} -\frac{N}{4\pi m}, & \text {if } m\equiv0\pmod{N},\\ 0, & \text{} otherwise.\end{cases}\end{equation}
Summing over the desired series in (3.4) for $m$ and $N$ gives the result.

\end{proof}

1390 Bumps River Rd. \\*
Centerville, MA
02632 \\*
USA \\*
E-mail: alexpatk@hotmail.com, alexepatkowski@gmail.com


\begin{thebibliography}{9}

\bibitem{ConcreteMath} P.T. Bateman and S. Chowla, \emph{Some special trigonometric series related to the
distribution of prime numbers,} J. London Math. Soc. 38 (1963), 372--374.

\bibitem{ConcreteMath} K. Chakraborty, S. Kanemitsu, H. Tsukada, \emph{Arithmetical Fourier series and the modular relation,} Kyushu Journal of Math. Vol. 66 (2012) No. 2 p. 411--427

\bibitem{ConcreteMath} H. Davenport, \emph{On some infinite series involving arithmetic function,} Quarterly Journal of Mathematics, 8 (1937), pp. 8--13.
\bibitem{ConcreteMath} H. Davenport, \emph{On some infinite series involving arithmetical functions II,} Quart. J. Math. Oxf.
8 (1937), 313--320.

\bibitem{ConcreteMath} H. M. Edwards. \emph{Riemann's Zeta Function,} 1974. Dover Publications.

\bibitem{ConcreteMath} I. S. Gradshteyn and I. M. Ryzhik. Table of Integrals, Series, and Products. Edited by A. Jeffrey and D. Zwillinger. Academic Press, New York, 7th edition, 2007.

\bibitem{ConcreteMath} S. Inoue, \emph{Relations among some conjectures on the M$\ddot{o}$bius function and the Riemann zeta-function,} Acta Arith. 191 (2019), 1--32

\bibitem{ConcreteMath} S. Jaffard, On Davenport expansions, in Fractal geometry and applications: a jubilee of Benoit Mandelbrot, Part 1, vol. 72 of Proc. Sympos. Pure Math., pages 273--303, Amer. Math. Soc., Providence, RI, 2004.

\bibitem{ConcreteMath} H.L. Li, J. Ma, W.P. Zhang, \emph{On some Diophantine Fourier series,} Acta Math. Sinica (Engl.
Ser.) 26 (2010) 1125--1132.

\bibitem{ConcreteMath} R. B. Paris, D. Kaminski, Asymptotics and Mellin--Barnes Integrals. Cambridge University Press. (2001)

\bibitem{ConcreteMath} A. Patkowski, \emph{On Popov's formula involving the von Mangoldt function,} The Pi Mu Epsilon Journal, Volume 15, No.1, pp.45--47, Fall 2019.

\bibitem{ConcreteMath} A. I. Popov, \emph{Several series containing primes and roots of $\zeta(s),$} C. R. Acad. Sri. U.R.S.S., N.S. 41 (1943), 362--3

\bibitem{ConcreteMath} O. Ramar$\acute{e},$ \emph{Explicit estimates for the summatory function of $\Lambda(n)/n$ from the one of $\Lambda(n),$} Acta Arithmetica, vol 159, no. 2, pp. 113--122, (2013)

\bibitem{ConcreteMath} S. Segal, \emph{On an identity between infinite series of arithmetic functions,} Acta Arithmetica 28.4 (1976): 345--348

\bibitem{ConcereteMath} E. C. Titchmarsh, \emph{The theory of the Riemann zeta function,} Oxford University Press,
2nd edition, 1986.



\end{thebibliography}
\end{document}